\newtheorem{theorem}{Theorem}[section]
\newtheorem{corollary}{Corollary}[section]
\newtheorem{lemma}{Lemma}[section]
\newtheorem{remark}{Remark}[section]
\newtheorem{example}{Example}[section]
\begin{document}

\title{The fixed point problem for systems of coordinate-wise monotone operators and applications}
\author{Mircea-Dan Rus\thanks{Department of Mathematics, Faculty of Automation and Computer Science, Technical University of Cluj-Napoca, 400027 Cluj-Napoca, Romania; e-mail: rus.mircea@math.utcluj.ro}}
\maketitle

\begin{abstract}
We study the fixed point problem for a system of multivariate operators that are coordinate-wise monotone (i.e., nondecreasing or nonincreasing in each of the variables, independently), in the setting of quasi-ordered sets. We show that this problem is equivalent to the fixed point problem for a mixed monotone operator that can be explicitly constructed. As a consequence, we obtain a criterion for the existence and uniqueness of solution to our problem, in the setting of partially ordered metric spaces. To validate our results, we provide an application to a first-order differential system with periodic boundary value conditions. A direct consequence that follows from our paper is that all the separate recent developments on the subject of tripled, quadrupled or multidimensional fixed points are but particular aspects of a single, unified and much simpler approach.\bigskip

\noindent \textbf{Keywords:} Coordinate-wise monotone operator; Partially monotone system; Mixed monotone operator; Fixed point; Coupled fixed point; Periodic boundary value system.\bigskip

\noindent \textbf{Mathematics Subject Classification (MSC 2000):} 47H10; 34B15.
\end{abstract}

\section{Introduction}

Let $N\geq2$ be a positive integer, $\mathcal{X}=\left\{  (X_{i},\leq
_{i}):i\in\{1,2,\ldots,N\}\right\}  $ a family of quasi-ordered sets (i.e.,
$\leq_{i}$ is a reflexive and transitive relation on set $X_{i}$) and
$\mathcal{T}=\left\{  T_{i}:X_{1}\times X_{2}\times\cdots\times X_{N}%
\rightarrow X_{i}:i\in\{1,2,\ldots,N\}\right\}  $ a family of operators. The
aim of this paper is to study the system%
\begin{equation}
x_{i}=T_{i}(x_{1},x_{2},\ldots,x_{N}),\quad i\in\{1,2,\ldots,N\}\quad\left(
(x_{1},x_{2},\ldots,x_{N})\in X_{1}\times X_{2}\times\cdots\times
X_{N}\right)  ,~ \label{eq:rus1207-01}%
\end{equation}
under the assumption that, for every $i\in\{1,2,\ldots,N\}$, $T_{i}$ is
\emph{coordinate-wise monotone} (i.e., for every $j$ $\in\{1,2,\ldots,N\}$,
$T_{i}$ is a nondecreasing or nonincreasing mapping of $x_{j}$, when all the
other variables are fixed). In this context, $\left(  \mathcal{X}%
,\mathcal{T}\right)  $ will be called \emph{a partially monotone system
}(\emph{of order }$N$), and (\ref{eq:rus1207-01}) will be referred to as
\emph{the fixed point problem for the partially monotone system }$\left(
\mathcal{X},\mathcal{T}\right)  $. It is worth mentioning that such systems
can model the dynamics of many chemical, physical, biological or financial processes.

This problem has been previously studied by the author in \cite{Rus2008}, in
the framework of ordered Banach spaces. Also, recently, Turinici
\cite{Turinici2011} investigated (\ref{eq:rus1207-01}) in the setting of
quasi-ordered metric spaces, under the assumption that all the operators in
$\mathcal{T}$ are nondecreasing (in each of the arguments), by studying the
fixed points of the nondecreasing operator
\begin{equation}
T=(T_{1},T_{2},\ldots,T_{N}):X\rightarrow X\qquad(X:=X_{1}\times X_{2}%
\times\cdots\times X_{N}), \label{eq:rus1207-01a}%
\end{equation}
subject to the usual product quasi-order on $X$
\begin{equation}
(x_{1},x_{2},\ldots,x_{N})\leq(y_{1},y_{2},\ldots,y_{N})\Longleftrightarrow
x_{i}\leq_{i}y_{i}\text{ for all }i\in\{1,2,\ldots,N\}\text{.}
\label{eq:rus1207-02}%
\end{equation}
Though significant, the case analyzed by Turinici \cite{Turinici2011} is
rather limited and does not provide an answer to (\ref{eq:rus1207-01}) in
general (see Remark \ref{th:rus1207-07}).

In this context, we prove that for any partially monotone system $\left(
\mathcal{X},\mathcal{T}\right)  $, the associated operator $T$ defined in
(\ref{eq:rus1207-01a}) is \emph{heterotone} (cf. \cite{Opoitsev1978a}), i.e.,
it can be expressed as%
\begin{equation}
T(x)=A(x,x)\quad\text{for all }x\in X\text{,} \label{eq:rus1207-03}%
\end{equation}
where $A:X^{2}\rightarrow X$ is some \emph{mixed-monotone} operator (i.e.,
nondecreasing in the first argument and nonincreasing in the second argument;
cf. \cite{Guo1987}); furthermore, we find $A$ explicitly, in terms of the
operators in $\mathcal{T}$ (Theorem \ref{th:rus1207-04}). In this way, we can
establish an effective equivalence between (\ref{eq:rus1207-01}) and the fixed
point problem for $A$. We further use this fact to obtain a criterion for the
existence and uniqueness of solutions to (\ref{eq:rus1207-01}) in the
framework of ordered metric spaces (see Theorem \ref{th:rus1207-05}) via a new
fixed point theorem for mixed monotone operators; as a particular case, we
study an abstract tripled fixed point problem. We also provide an application
to a periodic boundary value system to validate our results.

\section{Preliminaries}

Let $(X,\leq)$ be a quasi-ordered set. If $Y$ is a nonempty subset of $X$ and
$x\in X$, then $x$ is called \emph{a lower bound for }$Y$ (and $Y$ is said to
be \emph{bounded from below by }$x$) if $x\leq y$ for all $y\in Y$; also, $x$
is called \emph{an upper bound for }$Y$ (and $Y$ is said to be \emph{bounded
from above by }$x$) if $y\leq x$ for all $y\in Y$. We say that $(X,\leq)$ is
\emph{quasi-directed} if every two-element subset of $X$ has a lower bound or
an upper bound; also, $(X,\leq)$ is said to be \emph{bi-directed} if every
two-element subset of $X$ has both a lower bound and an upper bound.

Now, let $X,Y,Z$ be nonempty sets, and $A:X^{2}\rightarrow Y$, $B:$
$Y^{2}\rightarrow Z$ two bivariate operators. The symmetric composition (or,
the $s$\emph{-composition} for short) of $A$ and $B$ is defined by%
\[
B\ast A:X^{2}\rightarrow Z,\quad(B\ast A)(x,y)=B(A(x,y),A(y,x))\quad(x,y\in
X).
\]
(cf. \cite{Rus2011}). Also, for each nonempty set $E$, denote by $P_{E}$ the
projection mapping%
\[
P_{E}:E^{2}\rightarrow E,\quad P(x,y)=x\quad(x,y\in E).
\]
The $s$-composition is an associative law, while $P_{X}$ and $P_{Y}$ are the
right- and the left identity elements, respectively (i.e., $A\ast P_{X}%
=P_{Y}\ast A=A$). In addition, if $X,Y,Z$ are quasi-ordered sets and $A,B$ are
mixed monotone, then $B\ast A$ is also mixed monotone. Consequently, if $A$ is
a bivariate self-map of $X$ (i.e., $A:X^{2}\rightarrow X$), then one can
define the functional powers (i.e., the iterates) of $A$ with respect to the
$s$-composition by%
\[
A^{n+1}=A\ast A^{n}=A^{n}\ast A\quad(n=0,1,2,...),\quad A^{0}=P_{X}\text{.}%
\]
Moreover, if $X$ is a quasi-ordered set and $A$ is mixed monotone, then
$A^{n}$ is mixed monotone for every $n$. More details can be found in
\cite{Rus2011}.

Recall also (cf. \cite{Guo1987}) that a pair $(x,y)\in X^{2}$ is called
\emph{a coupled fixed point} of an operator $A:X^{2}\rightarrow X$ if
$A(x,y)=x$ and $A(y,x)=y$. Also, $x$ is called a \emph{fixed point} of $A$ if
$A(x,x)=x$.

Let us also consider the following class of functions%
\[
\Phi=\left\{  \varphi:[0,\infty)\rightarrow\lbrack0,\infty),~\varphi\text{ is
nondecreasing and }\varphi^{n}(t)\rightarrow 0\text{ as }n\rightarrow
\infty\text{ for all }t\geq0\right\}  \text{,}%
\]
where $\varphi^{n}$ denotes the $n$-th iterate of function $\varphi$. The
elements of $\Phi$ are sometimes referred to as \emph{comparison functions}.
For example, $\varphi(t)=\alpha t$ (where $\alpha\in\lbrack0,1)$),
$\varphi(t)=\ln(1+t)$ and $\varphi(t)=\frac{t}{t+1}$ ($t\geq0$) are such
functions (cf. \cite{O'Regan2008}).

\section{Auxiliary results}

In order to prove our results, we indirectly need the following fixed point
theorem for nondecreasing operators, due to O'Regan and Petru\c{s}el
\cite{O'Regan2008}.

\begin{theorem}
[O'Regan and Petru\c{s}el, \cite{O'Regan2008}]\label{th:rus1207-01}Let
$\left(  X,\leq\right)  $ be a quasi-directed, partially ordered set, $d$ a
complete metric on $X$ and $T:X\rightarrow X$ a nondecreasing operator such
that%
\[
d\left(  T(x),T(y)\right)  \leq\varphi\left(  d(x,y)\right)  \quad\text{for
all }x,y\in X\text{ with }x\leq y\text{,}%
\]
for some $\varphi\in\Phi$. Assume that either

\begin{enumerate}
\item[(a1)] $T$ is continuous
\end{enumerate}

or

\begin{enumerate}
\item[(a2)] every nondecreasing and convergent sequence in $X$ is bounded from
above by its limit.
\end{enumerate}

If there exists $x_{0}\in X$ such that $x_{0}\leq T(x_{0})$, then $T$ has a
unique fixed point $x^{\ast}\in X$ and $T^{n}(x)\rightarrow x^{\ast}$ (as
$n\rightarrow\infty$) for all $x\in X$.
\end{theorem}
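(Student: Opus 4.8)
The plan is to combine the classical monotone-iteration argument with the contraction estimate governed by $\varphi$. I would begin with the Picard sequence $x_{n}=T^{n}(x_{0})$, starting from the point $x_{0}$ supplied by the hypothesis $x_{0}\le T(x_{0})$. Since $T$ is nondecreasing, the inequality $x_{0}\le T(x_{0})$ propagates to $x_{n}\le x_{n+1}$ for every $n$, so $(x_{n})$ is a nondecreasing chain. Consecutive iterates are therefore comparable, the contractive hypothesis applies along the chain, and iterating it (using that $\varphi$ is nondecreasing) gives $d(x_{n},x_{n+1})\le\varphi^{n}(d(x_{0},x_{1}))$, which tends to $0$ because $\varphi\in\Phi$.

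The main obstacle is passing from $d(x_{n},x_{n+1})\to0$ to the Cauchy property: members of $\Phi$ need not be summable (e.g. $\varphi(t)=t/(t+1)$), so the naive telescoping bound $\sum d(x_{n},x_{n+1})$ may diverge and cannot be used. I would instead exploit that $\varphi\in\Phi$ forces $\varphi(t)<t$ for all $t>0$ (otherwise $\varphi^{n}(t)$ could not vanish), together with the monotonicity of $\varphi$. Fixing $\varepsilon>0$, I pick $N$ with $d(x_{N},x_{N+1})\le\varepsilon-\varphi(\varepsilon)$ and prove by induction on $k$ that $d(x_{N},x_{N+k})\le\varepsilon$. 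The inductive step bounds $d(x_{N},x_{N+k+1})\le d(x_{N},x_{N+1})+d(x_{N+1},x_{N+k+1})$ and, since $x_{N+1}=T(x_{N})$ and $x_{N+k+1}=T(x_{N+k})$ with $x_{N}\le x_{N+k}$, estimates $d(x_{N+1},x_{N+k+1})\le\varphi(d(x_{N},x_{N+k}))\le\varphi(\varepsilon)$ using the inductive hypothesis, so that $d(x_{N},x_{N+k+1})\le(\varepsilon-\varphi(\varepsilon))+\varphi(\varepsilon)=\varepsilon$. This confines the whole tail to the $\varepsilon$-ball around $x_{N}$, which yields the Cauchy property, and completeness of $d$ then produces a limit $x^{\ast}=\lim_{n}x_{n}$.

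Next I would verify that $x^{\ast}$ is a fixed point: under (a1) this is immediate from continuity, while under (a2) the nondecreasing convergent chain satisfies $x_{n}\le x^{\ast}$, whence $d(x_{n+1},T(x^{\ast}))=d(T(x_{n}),T(x^{\ast}))\le\varphi(d(x_{n},x^{\ast}))\le d(x_{n},x^{\ast})\to0$, forcing $T(x^{\ast})=x^{\ast}$. Finally, for uniqueness and for the global convergence claim I would use quasi-directedness as the bridge between incomparable points, the key estimate being that $u\le v$ implies $T^{n}(u)\le T^{n}(v)$ and $d(T^{n}(u),T^{n}(v))\le\varphi^{n}(d(u,v))\to0$. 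Given an arbitrary $x\in X$, the pair $\{x,x^{\ast}\}$ has a lower or an upper bound $z$; in either case $z$ is comparable to both $x$ and $x^{\ast}$, so $d(T^{n}(z),x^{\ast})=d(T^{n}(z),T^{n}(x^{\ast}))\to0$ and $d(T^{n}(x),T^{n}(z))\to0$, whence $T^{n}(x)\to x^{\ast}$ by the triangle inequality. Applying the same argument to a comparable bound of two fixed points $\{x^{\ast},y^{\ast}\}$ shows $x^{\ast}=y^{\ast}$, giving uniqueness and completing the proof.
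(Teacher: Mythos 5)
Your proof is correct, but there is nothing in the paper to compare it against: Theorem \ref{th:rus1207-01} is imported as a known result of O'Regan and Petru\c{s}el \cite{O'Regan2008} and the paper gives no proof of it, only using it as a black box to derive Theorem \ref{th:rus1207-02}. What you have reconstructed is essentially the standard argument for fixed point theorems with Matkowski-type comparison functions in ordered metric spaces, and all the delicate points are handled properly. In particular: (i) you correctly note that $\varphi\in\Phi$ forces $\varphi(t)<t$ for $t>0$ (if $\varphi(t)\geq t$ then monotonicity gives $\varphi^{n}(t)\geq t$ for all $n$); the same argument gives $\varphi(0)=0$, which you implicitly use in the (a2) step when bounding $\varphi\left(d(x_{n},x^{\ast})\right)\leq d(x_{n},x^{\ast})$, so it would be worth making explicit; (ii) your $\varepsilon-\varphi(\varepsilon)$ induction is exactly the right substitute for the unavailable summability/telescoping argument --- this is the standard device for comparison functions that are not $(c)$-comparison functions, and the induction is valid because the chain is nondecreasing, so $x_{N}\leq x_{N+k}$ keeps the contractive condition applicable; (iii) the use of quasi-directedness to bridge possibly incomparable points via a common lower or upper bound $z$, combined with the estimate $d\left(T^{n}(u),T^{n}(v)\right)\leq\varphi^{n}\left(d(u,v)\right)\rightarrow0$ for comparable $u\leq v$, correctly yields both global convergence and uniqueness. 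One cosmetic remark: uniqueness already follows from the global convergence claim (a second fixed point $y^{\ast}$ satisfies $y^{\ast}=T^{n}(y^{\ast})\rightarrow x^{\ast}$), so your final appeal to a bound of the pair $\{x^{\ast},y^{\ast}\}$ is redundant, though harmless.
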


Based on Theorem \ref{th:rus1207-01}, we derive a similar result for the class
of mixed monotone operators, which will form the basis for the proof of one of
our main theorems (see Theorem \ref{th:rus1207-05}). For a related result due
to Lakshmikantham and \'{C}iri\'{c}, we point to \cite[Corollary
2.2]{Lakshmikantham2009}.

\begin{theorem}
\label{th:rus1207-02}Let $\left(  X,\leq\right)  $ be a bi-directed, partially
ordered set, $d$ a complete metric on $X$ and $A:X^{2}\rightarrow X$ a mixed
monotone operator such that%
\begin{equation}
d\left(  A(x,y),A(u,v)\right)  \leq\varphi\left(  \max\left\{
d(x,u),d(y,v)\right\}  \right)  \quad\text{for all }x,y,u,v\in X\text{ with
}x\leq u\text{, }y\geq v \label{eq:rus1207-04}%
\end{equation}
for some $\varphi\in\Phi$. Assume that either

\begin{enumerate}
\item[(b1)] $A$ is continuous
\end{enumerate}

or

\begin{enumerate}
\item[(b2)] every nondecreasing (respectively, nonincreasing) and convergent
sequence in $X$ is bounded from above (respectively, from below) by its limit.
\end{enumerate}

If there exist $x_{0},y_{0}\in X$ such that $x_{0}\leq A(x_{0},y_{0})$ and
$y_{0}\geq A(y_{0},x_{0})$, then there exists $x^{\ast}\in X$ such that
$(x^{\ast},x^{\ast})$ is the unique coupled fixed point of $A$ (hence,
$x^{\ast}$ is the unique fixed point of $A$) and $A^{n}(x,y)\rightarrow
x^{\ast}$ (as $n\rightarrow\infty$) for all $x,y\in X$.
\end{theorem}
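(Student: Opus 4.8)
The plan is to reduce the statement to Theorem \ref{th:rus1207-01} by lifting the mixed monotone operator $A$ to a genuinely nondecreasing self-map of the product space $X^{2}$. Concretely, I would introduce the operator $\widetilde{A}:X^{2}\rightarrow X^{2}$, $\widetilde{A}(x,y)=(A(x,y),A(y,x))$, together with the reversed product order $\preceq$ on $X^{2}$ defined by $(x,y)\preceq(u,v)\Leftrightarrow x\leq u$ and $y\geq v$, and the metric $\widetilde{d}((x,y),(u,v))=\max\{d(x,u),d(y,v)\}$. The point of this construction is twofold: first, $(x,y)$ is a coupled fixed point of $A$ precisely when it is a fixed point of $\widetilde{A}$; second, the mixed monotonicity of $A$ translates exactly into monotonicity of $\widetilde{A}$ with respect to $\preceq$. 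Indeed, if $(x,y)\preceq(u,v)$ then $A(x,y)\leq A(u,v)$ and $A(y,x)\geq A(v,u)$, i.e. $\widetilde{A}(x,y)\preceq\widetilde{A}(u,v)$.

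Next I would verify that all the hypotheses of Theorem \ref{th:rus1207-01} hold for $(X^{2},\preceq)$, $\widetilde{d}$ and $\widetilde{A}$. The relation $\preceq$ inherits antisymmetry from $\leq$, so it is a partial order; $\widetilde{d}$ is complete because $d$ is; and since $X$ is bi-directed, every two-element subset of $X^{2}$ admits both a $\preceq$-lower and a $\preceq$-upper bound (an upper bound is obtained from an upper bound of the first coordinates together with a lower bound of the second coordinates, and symmetrically), so $(X^{2},\preceq)$ is quasi-directed. The contraction condition transfers from (\ref{eq:rus1207-04}): for $(x,y)\preceq(u,v)$ the first-coordinate distance $d(A(x,y),A(u,v))$ is directly bounded by $\varphi(\widetilde{d}((x,y),(u,v)))$, while for the second coordinate one applies (\ref{eq:rus1207-04}) to the pair $A(v,u),A(y,x)$ (note $v\leq y$, $u\geq x$) and uses the symmetry of $d$ to obtain the same bound; taking the maximum and using that $\varphi$ is nondecreasing yields $\widetilde{d}(\widetilde{A}(x,y),\widetilde{A}(u,v))\leq\varphi(\widetilde{d}((x,y),(u,v)))$. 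Finally, (b1) gives continuity of $\widetilde{A}$, (b2) gives the limit-bound condition (a2) for $\preceq$-monotone sequences, and the hypothesis $x_{0}\leq A(x_{0},y_{0})$, $y_{0}\geq A(y_{0},x_{0})$ is exactly $(x_{0},y_{0})\preceq\widetilde{A}(x_{0},y_{0})$.

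Theorem \ref{th:rus1207-01} then provides a unique fixed point $(x^{\ast},y^{\ast})$ of $\widetilde{A}$, i.e. a unique coupled fixed point of $A$, together with the convergence $\widetilde{A}^{n}(x,y)\rightarrow(x^{\ast},y^{\ast})$. To upgrade this to the stated form I would argue by symmetry: if $(x^{\ast},y^{\ast})$ is a coupled fixed point then so is $(y^{\ast},x^{\ast})$, and uniqueness forces $x^{\ast}=y^{\ast}$; hence $(x^{\ast},x^{\ast})$ is the unique coupled fixed point and $x^{\ast}$ a fixed point of $A$. For the convergence statement phrased via the $s$-composition iterates, a short induction shows $\widetilde{A}^{n}(x,y)=(A^{n}(x,y),A^{n}(y,x))$ --- the inductive step being exactly the identity $A^{n+1}(x,y)=A(A^{n}(x,y),A^{n}(y,x))$ coming from $A^{n+1}=A\ast A^{n}$ --- so that $\widetilde{A}^{n}(x,y)\rightarrow(x^{\ast},x^{\ast})$ yields $A^{n}(x,y)\rightarrow x^{\ast}$ for all $x,y\in X$.

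I expect the construction itself --- choosing the reversed product order so that a mixed monotone operator becomes nondecreasing --- to be the conceptual crux; once it is in place, the rest is verification. The most error-prone point is the contraction estimate on the second coordinate, where one must reorder the arguments and invoke the symmetry of $d$ to match the one-sided comparison form of (\ref{eq:rus1207-04}); the identification of $\widetilde{A}^{n}$ with the $s$-composition iterates is routine but needs the definitions from the Preliminaries to be unwound carefully.
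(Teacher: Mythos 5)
Your proposal is correct and follows essentially the same route as the paper's own proof: the lift $T(x,y)=(A(x,y),A(y,x))$ on $(X^{2},\preccurlyeq)$ with the max metric, the transfer of the contraction condition by swapping arguments in (\ref{eq:rus1207-04}), the reduction of (b1), (b2) to (a1), (a2), the symmetry argument forcing $y^{\ast}=x^{\ast}$, and the identification $T^{n}(x,y)=\left(A^{n}(x,y),A^{n}(y,x)\right)$ all appear verbatim in the paper. No gaps; the verification details you flag as delicate (the second-coordinate estimate and the $s$-composition iterates) are handled exactly as you describe.
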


\begin{proof}
Let $X^{2}$ be partially ordered by $(x,y)\preccurlyeq(u,v)\Leftrightarrow
x\leq u$, $y\geq v$. Since $\left(  X,\leq\right)  $ is bi-directed, it
follows that $(X^{2},\preccurlyeq)$ is quasi-directed (cf. \cite[Remark
2.6]{Rus2011}). Also, $\delta\left(  (x,y),(u,v)\right)  =\max\left\{
d(x,u),d(y,v)\right\}  $ ($x,y,u,v\in X$) defines a complete metric on $X^{2}$.

Now, let $T:X^{2}\rightarrow X^{2}$ be defined by $T(x,y)=\left(
A(x,y),A(y,x)\right)  $ ($x,y\in X$). It easily checks that $T$ is
nondecreasing, by the mixed-monotonicity of $A$. By switching $x$ with $v$,
and $y$ with $u$ in (\ref{eq:rus1207-04}), it follows that%
\begin{equation}
d\left(  A(y,x),A(v,u)\right)  \leq\varphi\left(  \max(d(x,u),d(y,v))\right)
\quad\text{for all }x,y,u,v\in X\text{ with }x\leq u\text{, }y\geq v.
\label{eq:rus1207-05}%
\end{equation}
Then together, (\ref{eq:rus1207-04}) and (\ref{eq:rus1207-05}) rewrite as%
\[
\delta\left(  T(x,y),T(u,v)\right)  \leq\varphi\left(  \delta\left(
(x,y),(u,v)\right)  \right)  \quad\text{for all }x,y,u,v\in X\text{ with
}(x,y)\preccurlyeq(u,v)\text{.}%
\]

Finally, (b1) implies (a1), and (b2) implies (a2) with $X^{2}$ in place of
$X$, by observing that a sequence $(x_{n},y_{n})$ in $X^{2}$ is nondecreasing
if and only if $(x_{n})$ is nondecreasing and $(y_{n})$ is nonincreasing in
$X$. Also, $(x_{0},y_{0})\preccurlyeq T(x_{0},y_{0})$.

Now, we can apply Theorem \ref{th:rus1207-01} for $(X^{2},\preccurlyeq)$,
$\delta$ and $T$; hence, $T$ has a unique fixed point $(x^{\ast},y^{\ast})$.
But then $(y^{\ast},x^{\ast})$ is also a fixed point of $T$; hence $y^{\ast
}=x^{\ast}$. Finally, it is straightforward to check that $T^{n}(x,y)=\left(
A^{n}(x,y),A^{n}(y,x)\right)  $ for all $x,y\in X$ and $n\geq0$, which
concludes the proof.
\end{proof}

\section{Main results}

In what follows, if not stated otherwise, $\left(  \mathcal{X},\mathcal{T}%
\right)  $ will be a partially monotone system of order $N\geq2$, with
$\mathcal{X}=\left\{  (X_{i},\leq_{i}):i\in\{1,2,\ldots,N\}\right\}  $ and
$\mathcal{T}=\left\{  T_{i}:X_{1}\times X_{2}\times\cdots\times X_{N}%
\rightarrow X_{i}:i\in\{1,2,\ldots,N\}\right\}  $). Also, let $X:=X_{1}\times
X_{2}\times\cdots\times X_{N}$ be endowed with the usual product quasi-order
$\leq$ defined in (\ref{eq:rus1207-02}) and let $T=(T_{1},T_{2},\ldots
,T_{N}):X\rightarrow X$. Also, for $x\in X$ and $j\in\{1,2,\ldots,N\}$, let
$x_{j}$ be the component in $X_{j}$ of $x$.

For each $i\in\{1,2,\ldots,N\}$, associate to $T_{i}$ the operator $\sigma
_{i}=(\sigma_{i,1},\sigma_{i,2},\ldots,\sigma_{i,N}):X^{2}\rightarrow X$ whose
components are defined as%
\begin{equation}
\sigma_{i,j}:X^{2}\rightarrow X_{j},\quad\sigma_{i,j}(x,y)=\left\{
\begin{array}
[c]{l}%
x_{j}\text{,\quad if }T_{i}\text{ is nondecreasing in the }j\text{-th
variable}\\
y_{j}\text{,\quad if }T_{i}\text{ is nonincreasing in the }j\text{-th
variable}%
\end{array}
\right.  (j\in\{1,2,\ldots,N\},~x,y\in X). \label{eq:rus1207-06}%
\end{equation}
Note that when $T_{i}$ is constant in the $j$-th variable (for some $j$), then
$\sigma_{i,j}$ can be chosen at convenience, but consistently, as if $T_{i}$
were either nondecreasing, or nonincreasing in the $j$-th variable (i.e.,
either $\sigma_{i,j}(x,y)=x_{j}$ for all $x,y\in X$, or $\sigma_{i,j}%
(x,y)=y_{j}$ for all $x,y\in X)$.

We also write $x\preccurlyeq_{i}y$ (for $x,y\in X$) when $\sigma_{i}%
(x,y)\leq\sigma_{i}(y,x)$, i.e.,
\begin{equation}
x\preccurlyeq_{i}y\Longleftrightarrow\left\{
\begin{array}
[c]{l}%
x_{j}\leq_{j}y_{j}\text{ if }T_{i}\text{ is nondecreasing in the }j\text{-th
variable}\\
x_{j}\geq_{j}y_{j}\text{ if }T_{i}\text{ is nonincreasing in the }j\text{-th
variable}%
\end{array}
,\right.  \quad\text{for all }j\in\{1,2,\ldots,N\}. \label{eq:rus1207-07}%
\end{equation}
This clearly defines an (alternative) quasi-order on $X$, subject to which
$T_{i}$ is nondecreasing in each of the variables.

With these notations, we prove the following result:

\begin{lemma}
\label{th:rus1207-03}Let $i\in\{1,2,\ldots,N\}$. The following properties take place:

\begin{enumerate}
\item[(c1)] $\sigma_{i}\left(  \sigma_{i}(x,y),\sigma_{i}(u,v)\right)
=\sigma_{i}(x,v)$ for all $x,y,u,v\in X$.

\item[(c2)] $\sigma_{i}^{2}=P_{X}$ , where the functional powers are
considered with respect to the $s$-composition.

\item[(c3)] For all $x,y,u,v\in X$: $\sigma_{i}(x,y)\preccurlyeq_{i}\sigma
_{i}(u,v)\Leftrightarrow\sigma_{i}(x,v)\leq\sigma_{i}(u,y)$.

\item[(c4)] For all $x,y\in X$: $\sigma_{i}(x,y)\preccurlyeq_{i}\sigma
_{i}(y,x)\Leftrightarrow x\leq y$.

\item[(c5)] $\sigma_{i}:(X^{2},\leq)\rightarrow(X,\leq)$ is nondecreasing,
i.e., $\sigma_{i}(x,y)\leq\sigma_{i}(u,v)$ whenever $x\leq u$ and $y\leq v$
($x,y,u,v\in X$).

\item[(c6)] $\sigma_{i}:(X^{2},\leq)\rightarrow(X,\preccurlyeq_{i})$ is mixed
monotone, i.e., $\sigma_{i}(x,y)\preccurlyeq_{i}\sigma_{i}(u,v)$ whenever
$x\leq u$ and $y\geq v$ ($x,y,u,v\in X$).

\item[(c7)] $T_{i}:(X,\preccurlyeq_{i})\rightarrow(X_{i},\leq)$ is
nondecreasing, i.e., $T_{i}(x)\leq T_{i}(y)$ whenever $x\preccurlyeq_{i}y$
($x,y\in X$).
\end{enumerate}
\end{lemma}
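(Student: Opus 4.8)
The plan is to exploit the fact that each $\sigma_i$ acts purely coordinate-wise, selecting for each $j$ either the $j$-th component of the first or of the second argument according to a fixed rule. First I would introduce a partition of the index set $\{1,2,\ldots,N\}$ into $A_i$ (the indices in which $T_i$ is nondecreasing) and $B_i$ (those in which it is nonincreasing), so that $\sigma_{i,j}(x,y)=x_j$ for $j\in A_i$ and $\sigma_{i,j}(x,y)=y_j$ for $j\in B_i$. With this bookkeeping in place, the foundational identity (c1) becomes a one-line coordinate check: writing $a=\sigma_i(x,y)$ and $b=\sigma_i(u,v)$, for $j\in A_i$ the $j$-th component of $\sigma_i(a,b)$ selects $a_j=x_j$, while for $j\in B_i$ it selects $b_j=v_j$; this is precisely the $j$-th component of $\sigma_i(x,v)$.

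From (c1) the properties (c2)--(c4) follow formally, with no further reference to the coordinates. I would first record the trivial remark that $\sigma_i(x,x)=x$, since every coordinate then reduces to the same component of $x$. Property (c2) is immediate: unwinding the $s$-composition, $(\sigma_i\ast\sigma_i)(x,y)=\sigma_i(\sigma_i(x,y),\sigma_i(y,x))=\sigma_i(x,x)=x$ by (c1), so $\sigma_i^2=P_X$. For (c3) I would apply the definition $a\preccurlyeq_i b\Leftrightarrow\sigma_i(a,b)\leq\sigma_i(b,a)$ to $a=\sigma_i(x,y)$, $b=\sigma_i(u,v)$, and use (c1) twice to simplify $\sigma_i(a,b)=\sigma_i(x,v)$ and $\sigma_i(b,a)=\sigma_i(u,y)$. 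Property (c4) is then the special case $u=y$, $v=x$ of (c3), again using $\sigma_i(x,x)=x$ and $\sigma_i(y,y)=y$.

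Property (c5) is another direct coordinate inspection: if $x\leq u$ and $y\leq v$ then $x_j\leq_j u_j$ and $y_j\leq_j v_j$ for every $j$, so in both cases ($j\in A_i$ or $j\in B_i$) the component selected in $\sigma_i(x,y)$ is dominated by the corresponding component of $\sigma_i(u,v)$. Property (c6) then drops out by combining (c3) and (c5): mixed monotonicity into $(X,\preccurlyeq_i)$ asserts $\sigma_i(x,y)\preccurlyeq_i\sigma_i(u,v)$ whenever $x\leq u$ and $y\geq v$, which by (c3) is equivalent to $\sigma_i(x,v)\leq\sigma_i(u,y)$; but $x\leq u$ together with $v\leq y$ is exactly the hypothesis under which (c5) delivers this inequality.

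The one genuinely separate step, and the place where the construction has to pay off, is (c7): I would show that $T_i$ is nondecreasing from $(X,\preccurlyeq_i)$ to $(X_i,\leq_i)$. Given $x\preccurlyeq_i y$, the definition of $\preccurlyeq_i$ gives $x_j\leq_j y_j$ for $j\in A_i$ and $x_j\geq_j y_j$ for $j\in B_i$. I would pass from $x$ to $y$ one coordinate at a time, forming intermediate points $z^{(0)}=x,\,z^{(1)},\ldots,z^{(N)}=y$, where $z^{(k)}$ agrees with $y$ in the first $k$ coordinates and with $x$ in the rest. At each step only the $k$-th coordinate changes, so the single-variable monotonicity of $T_i$ applies: for $k\in A_i$ the $k$-th input increases and $T_i$ is nondecreasing there, while for $k\in B_i$ the $k$-th input decreases and $T_i$ is nonincreasing there, so in either case $T_i(z^{(k-1)})\leq_i T_i(z^{(k)})$. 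Transitivity of $\leq_i$ then chains these into $T_i(x)\leq_i T_i(y)$. This telescoping argument is the only place where the coordinate-wise monotonicity hypothesis on $T_i$ is actually invoked, and it is precisely what the auxiliary order $\preccurlyeq_i$ was engineered to linearize; I expect it to be the main (though still routine) obstacle, the remaining items being either one-line coordinate checks or formal consequences of the single identity (c1).
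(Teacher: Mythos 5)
Your proof is correct and follows essentially the same route as the paper: (c1) by direct coordinate inspection, (c2)--(c4) as formal consequences of (c1) together with the remark $\sigma_i(x,x)=x$, (c5) by inspection, and (c6) by combining (c3) with (c5). The only divergence is at (c7), where the paper dismisses the claim as a clear consequence of the definition of $\preccurlyeq_i$, whereas you supply the standard one-coordinate-at-a-time telescoping chain $z^{(0)}=x,\ldots,z^{(N)}=y$ that justifies passing from separate coordinate-wise monotonicity to joint monotonicity --- a detail the paper leaves implicit but that your argument correctly makes explicit.
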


\begin{proof}
Fix $i\in\{1,2,\ldots,N\}$.

\begin{enumerate}
\item[(c1)] Let $x,y,u,v\in X$ arbitrary and $j\in\{1,2,\ldots,N\}$.

When $T_{i}$ is nondecreasing in the $j$-th variable, then $\sigma
_{i,j}\left(  \sigma_{i}(x,y),\sigma_{i}(u,v)\right)  =\sigma_{i,j}%
(x,y)=x_{j}=\sigma_{i,j}(x,v)$. Else, when $T_{i}$ is nonincreasing in the
$j$-th variable, then $\sigma_{i,j}\left(  \sigma_{i}(x,y),\sigma
_{i}(u,v)\right)  =\sigma_{i,j}(u,v)=v_{j}=\sigma_{i,j}(x,v)$, concluding the proof.

\item[(c2)] This follows from (c1), since $\sigma_{i}^{2}(x,y)=\sigma
_{i}\left(  \sigma_{i}(x,y),\sigma_{i}(y,x)\right)  =\sigma_{i}(x,x)=x=P_{X}%
(x,y)$ for all $x,y\in X$.

\item[(c3)] $\sigma_{i}(x,y)\preccurlyeq_{i}\sigma_{i}(u,v)$ is equivalent to
$\sigma_{i}\left(  \sigma_{i}(x,y),\sigma_{i}(u,v)\right)  \leq\sigma
_{i}\left(  \sigma_{i}(u,v),\sigma_{i}(x,y)\right)  $ by the definition of
$\preccurlyeq_{i}$, hence to $\sigma_{i}(x,v)\leq\sigma_{i}(u,y)$ by (c1), for
all $x,y,u,v\in X$.

\item[(c4)] This follows from (c3), by letting $u:=y$, $v:=x$.

\item[(c5)] This is obviously true, by the definition of $\sigma_{i}$.

\item[(c6)] If $x\leq u$ and $y\geq v$ ($x,y,u,v\in X$), then $\sigma
_{i}(x,y)\preccurlyeq_{i}\sigma_{i}(u,v)$ is equivalent, by (c3), with
$\sigma_{i}(x,v)\leq\sigma_{i}(u,y)$, which follows from (c5).

\item[(c7)] This is a clear consequence of the definition of $\preccurlyeq
_{i}$ (see (\ref{eq:rus1207-07})).
\end{enumerate}
\end{proof}

The following announced result establishes that $T$ is heterotone and shows
how to construct a mixed monotone operator $A$ such that (\ref{eq:rus1207-03})
is satisfied. This is an extension of one of our previous results \cite[Lemma
3.1]{Rus2008} established in the setting of ordered Banach spaces.

\begin{theorem}
\label{th:rus1207-04}The operator%
\begin{equation}
A=(A_{1},A_{2},\ldots,A_{N}):X^{2}\rightarrow X,\quad A_{i}=T_{i}\sigma
_{i}\quad(i\in\{1,2,\ldots,N\}) \label{eq:rus1207-08}%
\end{equation}
is mixed monotone (subject to $\leq$) and satisfies (\ref{eq:rus1207-03});
hence, $T$ is heterotone. Moreover,%
\[
\left(  A_{i}\ast\sigma_{i}\right)  (x,y)=T_{i}(x)\quad\text{for all }%
i\in\{1,2,\ldots,N\}\text{ and }x,y\in X\text{.}%
\]

\end{theorem}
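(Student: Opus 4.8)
The plan is to prove the theorem in three stages, leveraging Lemma~\ref{th:rus1207-03} throughout. First I would verify the heterotonicity identity~(\ref{eq:rus1207-03}), namely $A(x,x)=T(x)$. Since $A=(A_1,\ldots,A_N)$ with $A_i=T_i\sigma_i$, it suffices to check componentwise that $A_i(x,x)=T_i(x)$. By the definition of $\sigma_i$ in~(\ref{eq:rus1207-06}), each component $\sigma_{i,j}(x,x)$ equals either $x_j$ or $x_j$ (depending on the monotonicity of $T_i$ in the $j$-th variable, but in both cases the diagonal input collapses to $x$), so $\sigma_i(x,x)=x$, and hence $A_i(x,x)=T_i(\sigma_i(x,x))=T_i(x)$. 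This is the routine observation $\sigma_i(x,x)=x=P_X(x,x)$ already implicit in the proof of (c2).

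Second I would establish that $A$ is mixed monotone with respect to $\leq$. Fix $i$ and suppose $x\leq u$, $y\geq v$. I want $A_i(x,y)\leq_i A_i(u,v)$, i.e.\ $T_i(\sigma_i(x,y))\leq_i T_i(\sigma_i(u,v))$. The key is to factor this through the alternative order $\preccurlyeq_i$: by (c6), the hypotheses $x\leq u$, $y\geq v$ yield $\sigma_i(x,y)\preccurlyeq_i\sigma_i(u,v)$, and then by (c7) the operator $T_i$ is $\preccurlyeq_i$-to-$\leq_i$ nondecreasing, so $T_i(\sigma_i(x,y))\leq_i T_i(\sigma_i(u,v))$. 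Composing (c6) and (c7) is exactly the content of the mixed monotonicity of $A_i$; since this holds for every $i$, $A$ is mixed monotone. Combined with the first stage, this shows $T$ is heterotone in the sense of~(\ref{eq:rus1207-03}).

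Third I would prove the displayed composition identity $(A_i\ast\sigma_i)(x,y)=T_i(x)$. Unwinding the definition of the $s$-composition, $(A_i\ast\sigma_i)(x,y)=A_i(\sigma_i(x,y),\sigma_i(y,x))=T_i\big(\sigma_i(\sigma_i(x,y),\sigma_i(y,x))\big)$. The inner expression is precisely $\sigma_i^2(x,y)$ with respect to the $s$-composition, which by (c2) equals $P_X(x,y)=x$. Hence $(A_i\ast\sigma_i)(x,y)=T_i(x)$, as claimed. Note the mild subtlety that the $s$-composition $A_i\ast\sigma_i$ must be interpreted correctly: $\sigma_i$ maps $X^2\to X$ while $A_i$ maps $X^2\to X_i$, so one should read this as the composite bivariate operator on $X$ whose output lands in $X_i$; the identity $\sigma_i^2=P_X$ from (c2) is what drives the collapse.

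The main obstacle, such as it is, lies in the second stage: one must resist trying to prove mixed monotonicity of $A$ directly from the coordinate-wise monotonicity of $T_i$, which would require a messy case analysis over the signs of monotonicity in each variable. The clean route is to route everything through the auxiliary order $\preccurlyeq_i$ and cite (c6)--(c7), which have already absorbed that case analysis. Everything else is a direct application of the identities (c1)--(c2) and the definitions.
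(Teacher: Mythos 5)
Your proof is correct and takes essentially the same route as the paper's: mixed monotonicity of each $A_i=T_i\sigma_i$ via the factorization through the auxiliary order $\preccurlyeq_i$ using (c6) and (c7) of Lemma \ref{th:rus1207-03}, the identity $A(x,x)=T(x)$ from the diagonal collapse $\sigma_i(x,x)=x$, and the $s$-composition identity via (c2). Your side remark on how to read $A_i\ast\sigma_i$ (with $\sigma_i:X^2\rightarrow X$ and $A_i:X^2\rightarrow X_i$, so the $s$-composition is well defined with intermediate set $X$) is a sound clarification that the paper leaves implicit.
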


\begin{proof}
Clearly, $A$ is correctly defined, since $T_{i}:X\rightarrow X_{i}$ and
$\sigma_{i}:X^{2}\rightarrow X$. The mixed monotonicity of $A$ follows by the
mixed monotonicity of every component $A_{i}=T_{i}\sigma_{i}:(X^{2}%
,\leq)\rightarrow(X_{i},\leq_{i})$, since $\sigma_{i}:(X^{2},\leq
)\rightarrow(X,\preccurlyeq_{i})$ is mixed monotone and $T_{i}:(X,\preccurlyeq
_{i})\rightarrow(X_{i},\leq_{i})$ is nondecreasing (by Lemma
\ref{th:rus1207-03}).

Property (\ref{eq:rus1207-03}) follows directly from (\ref{eq:rus1207-08}).
Moreover, for every $i\in\{1,2,\ldots,N\}$ and $x,y\in X$, it follows by (c2)
in Lemma \ref{th:rus1207-03} that%
\[
\left(  A_{i}\ast\sigma_{i}\right)  (x,y)=T_{i}\sigma_{i}(\sigma
_{i}(x,y),\sigma_{i}(y,x))=T_{i}\sigma_{i}^{2}(x,y)=T_{i}P_{X}(x,y)=T_{i}(x),
\]
which concludes the proof.
\end{proof}

\begin{example}
Consider (\ref{eq:rus1207-01}) for $N=3$:%
\[
\left\{
\begin{array}
[c]{c}%
x_{1}=T_{1}(x_{1},x_{2},x_{3})\\
x_{2}=T_{2}(x_{1},x_{2},x_{3})\\
x_{3}=T_{3}(x_{1},x_{2},x_{3})
\end{array}
\right.
\]
and assume that $T_{1}$ is nondecreasing in the first two argument and
nonincreasing in the last one, $T_{2}$ is nondecreasing in the last two
arguments and nonincreasing in the first one, while $T_{3}$ is nonincreasing
in all of the three arguments; we can represent this, for short, as%
\[
\left\{
\begin{array}
[c]{c}%
T_{1}:\nearrow\nearrow\searrow\\
T_{2}:\searrow\nearrow\nearrow\\
T_{3}:\searrow\searrow\searrow
\end{array}
.\right.
\]
In this case, the operators $\sigma_{i}$ are%
\[
\left\{
\begin{array}
[c]{c}%
\sigma_{1}(x,y)=(x_{1},x_{2},y_{3})\\
\sigma_{2}(x,y)=(y_{1},x_{2},x_{3})\\
\sigma_{3}(x,y)=(y_{1},y_{2},y_{3})
\end{array}
\right.  ,
\]
the partial orders $\preccurlyeq_{i}$ are defined as%
\[
\left\{
\begin{array}
[c]{c}%
x\preccurlyeq_{1}y\Leftrightarrow x_{1}\leq_{1}y_{1},~x_{2}\leq_{2}%
y_{2},~x_{3}\geq_{3}y_{3}\\
x\preccurlyeq_{2}y\Leftrightarrow x_{1}\geq_{1}y_{1},~x_{2}\leq_{2}%
y_{2},~x_{3}\leq_{3}y_{3}\\
x\preccurlyeq_{3}y\Leftrightarrow x_{1}\geq_{1}y_{1},~x_{2}\geq_{2}%
y_{2},~x_{3}\geq_{3}y_{3}%
\end{array}
,\right.
\]
while $A$ has the expression
\[
A(x,y)=%
\begin{pmatrix}
T_{1}(x_{1},x_{2},y_{3})\\
T_{2}(y_{1},x_{2},x_{3})\\
T_{3}(y_{1},y_{2},y_{3})
\end{pmatrix}
\]
for all $x=(x_{1},x_{2},x_{3})$ and $y=(y_{1},y_{2},y_{3})$ in $X$.
\end{example}

Though simple, Theorem \ref{th:rus1207-04} is a powerful tool that connects
every partially monotone system to a mixed monotone operator. In this way, we
can study (\ref{eq:rus1207-01}) by analyzing the fixed points of operator $A$
defined in (\ref{eq:rus1207-08}), e.g., using Theorem \ref{th:rus1207-02}. Our
following result enforces this assertion and shows how this can be used in practice.

\begin{theorem}
\label{th:rus1207-05}Let $(X_{i},\leq_{i})$ be partially ordered, bi-directed
and $d_{i}$ be a complete metric on $X_{i}$, for every $i\in\left\{
1,2,\ldots,N\right\}  $. Assume there exists $\varphi\in\Phi$ such that%
\begin{equation}
d_{i}(T_{i}(x),T_{i}(y))\leq\varphi\left(  \max\limits_{j}d_{j}(x_{j}%
,y_{j})\right)  \quad\text{for all }x,y\in X\text{ with }x\preccurlyeq
_{i}y\text{ and }i\in\{1,2,\ldots,N\}\text{.} \label{eq:rus1207-09}%
\end{equation}

Assume also that either

\begin{enumerate}
\item[(d1)] $T_{i}$ is continuous, for every $i\in\left\{  1,2,\ldots
,N\right\}  $
\end{enumerate}

or

\begin{enumerate}
\item[(d2)] every nondecreasing (respectively, nonincreasing) and convergent
sequence in $X_{i}$ is bounded from above (respectively, from below) by its
limit, for every $i\in\left\{  1,2,\ldots,N\right\}  $.
\end{enumerate}

If there exists $(x^{0},y^{0})\in X^{2}$ such that%
\begin{equation}
x_{i}^{0}\leq_{i}T_{i}\sigma_{i}(x^{0},y^{0}),\text{~}y_{i}^{0}\geq_{i}%
T_{i}\sigma_{i}(y^{0},x^{0})\quad\text{for all }i\in\{1,2,\ldots,N\},
\label{eq:rus1207-10}%
\end{equation}
then (\ref{eq:rus1207-01}) has a unique solution $x^{\ast}\in X$. Moreover,
for every $u^{0},v^{0}\in X$, the sequences $(u^{n})$, $(v^{n})$ in $X$
defined recursively by%
\[
u_{i}^{n+1}=T_{i}\sigma_{i}(u^{n},v^{n}),~v_{i}^{n+1}=T_{i}\sigma_{i}%
(v^{n},u^{n})\quad(i\in\{1,2,\ldots,N\},~n\geq0\mathbb{)}%
\]
are convergent to $x^{\ast}$ with respect to the product topology on $X$,
i.e., $\left(  u_{i}^{n}\right)  \rightarrow x_{i}^{\ast}$ and $\left(
v_{i}^{n}\right)  \rightarrow x_{i}^{\ast}$ as $n\rightarrow\infty$, for all
$i\in\left\{  1,2,\ldots,N\right\}  $.
\end{theorem}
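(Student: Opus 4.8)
The plan is to apply Theorem \ref{th:rus1207-02} to the mixed monotone operator $A=(A_1,\ldots,A_N)$ constructed in Theorem \ref{th:rus1207-04}, working on the product space $X$ endowed with the product order $\leq$ from (\ref{eq:rus1207-02}) and the max-metric $d(x,y)=\max_j d_j(x_j,y_j)$. First I would dispatch the easy structural facts: since each $(X_i,\leq_i)$ is bi-directed, the product $(X,\leq)$ is bi-directed (one forms coordinate-wise lower and upper bounds), and since each $d_i$ is complete, $d$ is a complete metric on the finite product $X$, with $d$-convergence coinciding with coordinate-wise (product-topology) convergence. The mixed monotonicity of $A$ with respect to $\leq$ is already supplied by Theorem \ref{th:rus1207-04}.

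The crux is verifying the contraction hypothesis (\ref{eq:rus1207-04}) for $A$. Fix $x\leq u$, $y\geq v$ and an index $i$. By property (c6) of Lemma \ref{th:rus1207-03} we have $\sigma_i(x,y)\preccurlyeq_i\sigma_i(u,v)$, so hypothesis (\ref{eq:rus1207-09}) applies with $\sigma_i(x,y)$ and $\sigma_i(u,v)$ in the roles of $x$ and $y$, yielding
\[
d_i(A_i(x,y),A_i(u,v))=d_i\big(T_i\sigma_i(x,y),T_i\sigma_i(u,v)\big)\leq\varphi\Big(\max_j d_j\big(\sigma_{i,j}(x,y),\sigma_{i,j}(u,v)\big)\Big).
\]
The decisive observation is that, by the definition (\ref{eq:rus1207-06}) of $\sigma_{i,j}$, each term $d_j(\sigma_{i,j}(x,y),\sigma_{i,j}(u,v))$ equals either $d_j(x_j,u_j)$ or $d_j(y_j,v_j)$, and in either case is bounded above by $\max\{d(x,u),d(y,v)\}$. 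Since $\varphi$ is nondecreasing, this gives $d_i(A_i(x,y),A_i(u,v))\leq\varphi(\max\{d(x,u),d(y,v)\})$ for every $i$, and taking the maximum over $i$ produces exactly (\ref{eq:rus1207-04}) for $A$. This coordinate-matching is the only place where genuine care is required, and it is the step I expect to be the main (albeit modest) obstacle.

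It then remains to transfer the side hypotheses. Under (d1), continuity of each $T_i$ together with the evident continuity of the projection-type map $\sigma_i$ makes $A$ continuous, giving (b1); under (d2), a sequence in $X$ is nondecreasing (respectively, nonincreasing) precisely when it is so in each coordinate, so the coordinate-wise boundedness assumptions deliver (b2). Finally, condition (\ref{eq:rus1207-10}) reads verbatim as $x^0\leq A(x^0,y^0)$ and $y^0\geq A(y^0,x^0)$, which is the starting hypothesis of Theorem \ref{th:rus1207-02}.

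Applying Theorem \ref{th:rus1207-02} then produces a unique $x^*$ for which $(x^*,x^*)$ is the unique coupled fixed point of $A$; since $A(x^*,x^*)=T(x^*)$ by (\ref{eq:rus1207-03}), $x^*$ solves (\ref{eq:rus1207-01}), and conversely any solution $x$ satisfies $A(x,x)=x$, forcing $x=x^*$ by uniqueness of the fixed point of $A$. For the iterates, the recursion defining $(u^n)$ and $(v^n)$ is precisely $u^{n+1}=A(u^n,v^n)$ and $v^{n+1}=A(v^n,u^n)$, so that $u^n=A^n(u^0,v^0)$ and $v^n=A^n(v^0,u^0)$ in the $s$-composition sense; the convergence $A^n(\cdot,\cdot)\to x^*$ from Theorem \ref{th:rus1207-02}, read in the max-metric and hence in the product topology, delivers the claimed coordinate-wise convergence $u_i^n\to x_i^*$ and $v_i^n\to x_i^*$.
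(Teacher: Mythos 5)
Your proposal is correct and follows essentially the same route as the paper's proof: apply Theorem \ref{th:rus1207-02} to the operator $A$ of Theorem \ref{th:rus1207-04} on $(X,\leq)$ with the max-metric, verify the contraction condition (\ref{eq:rus1207-04}) via property (c6) of Lemma \ref{th:rus1207-03} and the observation that each $d_j(\sigma_{i,j}(x,y),\sigma_{i,j}(u,v))$ is one of $d_j(x_j,u_j)$, $d_j(y_j,v_j)$, and transfer (d1), (d2) and (\ref{eq:rus1207-10}) to (b1), (b2) and the starting condition. The only difference is cosmetic: you make explicit the diagonal identification of fixed points of $A$ with solutions of (\ref{eq:rus1207-01}) and the identity $u^n=A^n(u^0,v^0)$, $v^n=A^n(v^0,u^0)$, which the paper leaves implicit.
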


\begin{proof}
It is enough to apply Theorem \ref{th:rus1207-02} for the mixed monotone
operator $A:X^{2}\rightarrow X$ defined in Theorem \ref{th:rus1207-04} to
obtain the conclusion.

Indeed, $(X,\leq)$ is bi-directed (as product of bi-directed partially ordered
sets; this follows as a simple extension of the argument in \cite[Remark
2.6]{Rus2011}). Also, (d1) implies (b1) (since the operators $\left\{
\sigma_{i}:i\in\{1,2,\ldots,N\}\right\}  $ are, clearly, continuous), while
(d2) implies (b2). Moreover, $x^{0}\leq A(x^{0},y^{0})$ and $y^{0}\geq
A(y^{0},x^{0})$ follow from (\ref{eq:rus1207-10}).

We only need to prove (\ref{eq:rus1207-04}) for the complete metric
$d(x,y)=\max\limits_{j}d_{j}(x_{j},y_{j})$ ($x,y\in X$), so let $x,y,u,v\in X$
with $x\leq u$, $y\geq v$ and fix $i\in\{1,2,\ldots,N\}$. With $s:=\sigma
_{i}(x,y)$ and $t:=\sigma_{i}(u,v)$, it follows that $s\preccurlyeq_{i}t$, by
(c6) in Lemma \ref{th:rus1207-03}; hence,%
\begin{equation}
d_{i}\left(  A_{i}(x,y),A_{i}(u,v)\right)  =d_{i}(T_{i}\sigma_{i}%
(x,y),T_{i}\sigma_{i}(u,v))=d_{i}(T_{i}(s),T_{i}(t))\leq\varphi\left(
\max\limits_{j}d_{j}(s_{j},t_{j})\right)  =\varphi\left(  d(s,t)\right)
\label{eq:rus1207-11}%
\end{equation}
by (\ref{eq:rus1207-09}). Since $d_{j}(s_{j},t_{j})\in\left\{  d_{j}%
(x_{j},u_{j}),d_{j}(y_{j},v_{j})\right\}  $ for all $j\in\{1,2,\ldots,N\}$, it
follows that $d(s,t)\leq\max\left\{  d(x,u),d(y,v)\right\}  $. Now
(\ref{eq:rus1207-04}) follows by (\ref{eq:rus1207-11}) and the fact that
$\varphi$ is nondecreasing. The proof is now complete.
\end{proof}

\begin{remark}
By letting $v^{0}:=u^{0}$ in the conclusion of Theorem \ref{th:rus1207-05}, it
follows that $u^{n}=v^{n}=T^{n}(u^{0})$ for all $n\geq0$; hence,
$T^{n}(x)\rightarrow x^{\ast}$ as $n\rightarrow\infty$ for all $x\in X$.
\end{remark}

Clearly, one can obtain other criteria for the existence and/or uniqueness of
solutions to (\ref{eq:rus1207-01}) by applying different (coupled) fixed point
results to the mixed monotone operator $A$. Further aspects will be delineated elsewhere.

\section{Applications}

\subsection{Application to a tripled fixed point problem}

Let $(X,\leq)$ be a partially ordered set and $F:X^{3}\rightarrow X$ such that
$F$ is nondecreasing in the first and the last variable, while nonincreasing
in the second one. In this context, Berinde and Borcut \cite{Berinde2011}
studied the existence and uniqueness of solutions to the tripled fixed point
problem:%
\begin{equation}
\left\{
\begin{array}
[c]{c}%
x=F(x,y,z)\\
y=F(y,x,y)\\
z=F(z,y,x)
\end{array}
\right.  \quad(x,y,z\in X). \label{eq:rus1207-12}%
\end{equation}
Turinici \cite{Turinici2011} showed that the arguments in \cite{Berinde2011}
can be greatly simplified by looking at (\ref{eq:rus1207-12}) as the fixed
point problem for the nondecreasing operator%
\begin{equation}
T:X^{3}\rightarrow X^{3},\quad T(x,y,z)=\left(
\begin{array}
[c]{c}%
F(x,y,z)\\
F(y,x,y)\\
F(z,y,x)
\end{array}
\right)  \quad(x,y,z\in X) \label{eq:rus1207-13}%
\end{equation}
subject to the partial order $(x,y,z)\preccurlyeq(u,v,w)\Leftrightarrow x\leq
u,y\geq v,z\leq w$.

It is not hard to see that neither of the arguments in
\cite{Berinde2011,Turinici2011} work for the following modified tripled fixed
point problem%
\begin{equation}
\left\{
\begin{array}
[c]{c}%
x=F(x,y,z)\\
y=F(y,x,z)\\
z=F(z,y,x)
\end{array}
\right.  \quad(x,y,z\in X), \label{eq:rus1207-14}%
\end{equation}
simply because the associated operator $T$ (defined in a similar way as in
(\ref{eq:rus1207-13})) is not nondecreasing with respect to any of the partial
orders on $X^{3}$ obtained as products of $\leq$ and/or $\geq$.

Clearly, (\ref{eq:rus1207-14}) is the fixed point problem for a partially
monotone system of order $3$, where $T_{1},T_{2},T_{3}:X^{3}\rightarrow X$
given by%
\begin{equation}
\left\{
\begin{array}
[c]{c}%
T_{1}(x,y,z)=F(x,y,z)\\
T_{2}(x,y,z)=F(y,x,z)\\
T_{3}(x,y,z)=F(z,y,x)
\end{array}
\right.  \quad(x,y,z\in X) \label{eq:rus1207-15}%
\end{equation}
are coordinate-wise monotone: $T_{1}$ and $T_{3}$ are both nondecreasing in
$x$ and $z$, while nonincreasing in $y$; $T_{2}$ is nonincreasing in $x$,
while nondecreasing in $y$ and $z$.

If $d$ is a metric on $X$, then a simple analysis on the contractive condition
(\ref{eq:rus1207-09}) shows that if $T_{1}$ satisfies it, then the same
happens for $T_{2}$ and $T_{3}$. In this context, by rewriting
(\ref{eq:rus1207-10}) in terms of $F$, the following result follows as a
direct consequence of Theorem \ref{th:rus1207-05}:

\begin{corollary}
\label{th:rus1207-06}Let $(X,\leq)$ be a bi-directed partially ordered set,
$d$ a complete metric on $X$ and $F:X^{3}\rightarrow X$ such that $F$ is
nondecreasing in the first and the last variable, while nonincreasing in the
second one. Assume there exists $\varphi\in\Phi$ such that%
\begin{equation}
d(F(x,y,z),F(u,v,w))\leq\varphi\left(  \max\left\{
d(x,u),d(y,v),d(z,w)\right\}  \right)  \quad\text{for all }x,y,z,u,v,w\in
X\text{ with }x\leq u,y\geq v,z\leq w. \label{eq:rus1207-16}%
\end{equation}

Assume also that either

\begin{enumerate}
\item[(e1)] $F$ is continuous
\end{enumerate}

or

\begin{enumerate}
\item[(e2)] every nondecreasing (respectively, nonincreasing) and convergent
sequence in $X$ is bounded from above (respectively, from below) by its limit.
\end{enumerate}

If there exist $x_{0},y_{0},z_{0},u_{0},v_{0},w_{0}\in X$ such that%
\begin{equation}
\left\{
\begin{array}
[c]{l}%
x_{0}\leq F(x_{0},v_{0},z_{0});\\
y_{0}\geq F(y_{0},u_{0},z_{0});\\
z_{0}\leq F(z_{0},v_{0},x_{0});
\end{array}%
\begin{array}
[c]{l}%
u_{0}\geq F(u_{0},y_{0},w_{0})\\
v_{0}\leq F(v_{0},x_{0},w_{0})\\
w_{0}\geq F(w_{0},y_{0},u_{0})
\end{array}
\right.  \text{,} \label{eq:rus1207-17}%
\end{equation}
then (\ref{eq:rus1207-15}) has a unique solution.
\end{corollary}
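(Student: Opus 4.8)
The plan is to recognize (\ref{eq:rus1207-15}) as a partially monotone system of order $3$ over the product space $X^{3}=X\times X\times X$ and then to verify, one hypothesis at a time, that Theorem \ref{th:rus1207-05} applies with $X_{1}=X_{2}=X_{3}=X$, $\leq_{i}=\leq$ and $d_{i}=d$. First I would record the monotonicity signature already noted in the text: from (\ref{eq:rus1207-15}), the operators $T_{1}$ and $T_{3}$ are nondecreasing in the first and third variables and nonincreasing in the second, while $T_{2}$ is nonincreasing in the first variable and nondecreasing in the second and third. From this I read off the selectors $\sigma_{1}(x,y)=(x_{1},y_{2},x_{3})$, $\sigma_{2}(x,y)=(y_{1},x_{2},x_{3})$, $\sigma_{3}(x,y)=(x_{1},y_{2},x_{3})$ together with the quasi-orders $\preccurlyeq_{i}$ of (\ref{eq:rus1207-07}), and compute the mixed monotone operator $A$ of Theorem \ref{th:rus1207-04} componentwise, namely $A_{1}(x,y)=F(x_{1},y_{2},x_{3})$, $A_{2}(x,y)=F(x_{2},y_{1},x_{3})$, $A_{3}(x,y)=F(x_{3},y_{2},x_{1})$; its mixed monotonicity is then free from Theorem \ref{th:rus1207-04}. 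A one-line check that $A_{i}(x,x)=x_{i}$ reproduces exactly the three equations of (\ref{eq:rus1207-14}), confirming that a fixed point of $A$ is precisely a solution of the system.

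The second block of hypotheses is the contraction (\ref{eq:rus1207-09}), and here I would show that the single inequality (\ref{eq:rus1207-16}) on $F$ forces (\ref{eq:rus1207-09}) for all three $T_{i}$ at once. The key observation is that, for each $i$, the sign pattern imposed by $x\preccurlyeq_{i}y$ on the triples $(x_{1},x_{2},x_{3})$ and $(y_{1},y_{2},y_{3})$ coincides, after the permutation of arguments built into $T_{i}$, with the hypothesis $x\leq u,\ y\geq v,\ z\leq w$ of (\ref{eq:rus1207-16}); applying (\ref{eq:rus1207-16}) to the permuted triples used by $T_{i}$ yields the bound $\varphi(\max\{d(x_{1},y_{1}),d(x_{2},y_{2}),d(x_{3},y_{3})\})$. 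Since the maximum is invariant under this reordering, the right-hand side equals $\varphi(\max_{j}d(x_{j},y_{j}))$ in every case, which is exactly (\ref{eq:rus1207-09}).

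Next I would translate the abstract initial condition (\ref{eq:rus1207-10}) into the explicit system (\ref{eq:rus1207-17}). Setting $x^{0}=(x_{0},y_{0},z_{0})$ and $y^{0}=(u_{0},v_{0},w_{0})$, I substitute into $x_{i}^{0}\leq A_{i}(x^{0},y^{0})$ and $y_{i}^{0}\geq A_{i}(y^{0},x^{0})$ using the explicit formulas for $A_{i}$ just obtained; each of the six inequalities then unwinds into one of the six conditions in (\ref{eq:rus1207-17}). This bookkeeping is the step I expect to be the most delicate: because the permutation of arguments inside $T_{2}$ (and $T_{3}$) interacts with the mixed-monotone pairing $(x^{0},y^{0})$, one must track carefully which of the six starting elements plays the role of a sub- or super-solution in each coordinate, and in particular the direction of the inequality in the coordinate governed by $T_{2}$ requires special attention. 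Everything else here is routine substitution.

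Finally I would dispatch the structural hypotheses: $(X^{3},\leq)$ is bi-directed as a finite product of bi-directed partially ordered sets, $d(x,y)=\max_{j}d(x_{j},y_{j})$ is a complete metric on $X^{3}$, and (e1) implies (d1) while (e2) implies (d2), exactly as in the proof of Theorem \ref{th:rus1207-05}. With all hypotheses of Theorem \ref{th:rus1207-05} verified, its conclusion yields a unique $x^{\ast}\in X^{3}$ solving the associated partially monotone system, i.e.\ a unique solution of (\ref{eq:rus1207-15}), which completes the argument.
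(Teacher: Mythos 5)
Your overall route is exactly the paper's: view (\ref{eq:rus1207-14}) as the fixed point problem for the partially monotone system (\ref{eq:rus1207-15}), compute the selectors $\sigma_{i}$ and the mixed monotone operator $A_{i}=T_{i}\sigma_{i}$ from Theorem \ref{th:rus1207-04}, transfer the single contraction (\ref{eq:rus1207-16}) to (\ref{eq:rus1207-09}) for all three $T_{i}$ via the permutation-invariance of the maximum, and invoke Theorem \ref{th:rus1207-05}. Your $\sigma_{i}$, your formulas $A_{1}(x,y)=F(x_{1},y_{2},x_{3})$, $A_{2}(x,y)=F(x_{2},y_{1},x_{3})$, $A_{3}(x,y)=F(x_{3},y_{2},x_{1})$, and your contraction argument are all correct. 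The gap sits precisely in the step you yourself flagged as delicate and then waved through. With your identification $x^{0}=(x_{0},y_{0},z_{0})$, $y^{0}=(u_{0},v_{0},w_{0})$, the $i=2$ pair of conditions in (\ref{eq:rus1207-10}) reads $x_{2}^{0}\leq A_{2}(x^{0},y^{0})$ and $y_{2}^{0}\geq A_{2}(y^{0},x^{0})$, i.e.
\[
y_{0}\leq F(y_{0},u_{0},z_{0}),\qquad v_{0}\geq F(v_{0},x_{0},w_{0}),
\]
whereas (\ref{eq:rus1207-17}) postulates $y_{0}\geq F(y_{0},u_{0},z_{0})$ and $v_{0}\leq F(v_{0},x_{0},w_{0})$ --- the opposite directions. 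So the six inequalities do \emph{not} ``unwind into'' (\ref{eq:rus1207-17}): the four governed by $T_{1}$ and $T_{3}$ do, but the two governed by $T_{2}$ come out reversed. Nor can this be repaired by re-pairing the initial data: taking, say, $x^{0}=(x_{0},v_{0},z_{0})$, $y^{0}=(u_{0},y_{0},w_{0})$ (so that the sub-solutions are grouped in $x^{0}$) changes the arguments appearing inside $F$ (it yields $x_{0}\leq F(x_{0},y_{0},z_{0})$, $v_{0}\leq F(v_{0},u_{0},z_{0})$, etc.), not merely the inequality directions, and one checks that (\ref{eq:rus1207-17}) as printed does not imply any instance of (\ref{eq:rus1207-10}) without additional, unstated comparability assumptions among the six starting points.

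The resolution is that the discrepancy lies in the displayed condition (\ref{eq:rus1207-17}) itself: the paper's proof consists only of the remark that the corollary follows ``by rewriting (\ref{eq:rus1207-10}) in terms of $F$,'' and that rewriting, carried out honestly, produces the variant of (\ref{eq:rus1207-17}) with $y_{0}\leq F(y_{0},u_{0},z_{0})$ and $v_{0}\geq F(v_{0},x_{0},w_{0})$; under that corrected reading your argument is complete and is essentially the paper's proof made explicit. But as a proof of the statement as printed, your write-up asserts the term-by-term identification without performing it, and the computation contradicts the assertion in the second coordinate. Since you singled out exactly this coordinate as requiring ``special attention,'' you needed to carry the substitution out: doing so would either have led you to the corrected form of (\ref{eq:rus1207-17}), or forced you to supply a bridge from the printed condition to (\ref{eq:rus1207-10}) --- a bridge that does not appear to exist.
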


\subsection{Application to a periodic boundary value system}

In what follows, we study the existence and uniqueness of the solution to a
periodic boundary value system (PBVS), as an application to Corollary
\ref{th:rus1207-06}.

Consider the following first-order PBVS:
\begin{equation}
\left\{
\begin{array}
[c]{l}%
x^{\prime}(t)=f(t,x(t),y(t),z(t))\\
y^{\prime}(t)=f(t,y(t),x(t),z(t))\\
z^{\prime}(t)=f(t,z(t),y(t),x(t))
\end{array}
\right.  \quad\text{for all }t\in I=[0,T]\text{,\quad with }\left\{
\begin{array}
[c]{c}%
x(0)=x(T)\\
y(0)=y(T)\\
z(0)=z(T)
\end{array}
\right.  . \label{eq:rus1207-18}%
\end{equation}
where $T>0$ and $f:I\times\mathbb{R}^{3}\rightarrow\mathbb{R}$, under the assumptions:

\begin{enumerate}
\item[(f1)] $f$ is continuous;

\item[(f2)] there exists $\lambda>0$ and $\varphi\in\Phi$ such that%
\[
-\lambda(u-x)\leq f(u,v,w)-f(x,y,z)\leq\lambda\left(  \varphi\left(
\max\left\{  u-x,y-v,w-z\right\}  \right)  -(u-x)\right)  \quad
\]
for all $x,y,z,u,v,w\in\mathbb{R}$ with $x\leq u$, $y\geq v$, $z\leq w$.
\end{enumerate}

We say that $%
\begin{pmatrix}
x & y & z\\
u & v & w
\end{pmatrix}
$ is a coupled lower and upper solution to (\ref{eq:rus1207-18}) if
$x,y,z,u,v,w\in C^{1}(I)$ and%
\[
\left\{
\begin{array}
[c]{ll}%
x^{\prime}(t)\leq f(t,x(t),v(t),z(t)); & u^{\prime}(t)\geq
f(t,u(t),y(t),w(t))\\
y^{\prime}(t)\geq f(t,y(t),u(t),z(t)); & v^{\prime}(t)\leq
f(t,v(t),x(t),w(t))\\
z^{\prime}(t)\leq f(t,z(t),v(t),x(t)); & w^{\prime}(t)\geq f(t.w(t),y(t),u(t))
\end{array}
\right.  \text{for all }t\in I=[0,T]\text{, and }\left\{
\begin{array}
[c]{cl}%
x(0)\leq x(T); & u(0)\geq u(T)\\
y(0)\geq y(T); & v(0)\leq v(T)\\
z(0)\leq z(T); & w(0)\geq w(T)
\end{array}
\right.  \text{.}%
\]
Clearly, if $(x,y,z)$ is a solution to (\ref{eq:rus1207-18}), then $%
\begin{pmatrix}
x & y & z\\
x & y & z
\end{pmatrix}
$ is a coupled lower and upper solution to (\ref{eq:rus1207-18}).

Recall that if $x\in C^{1}(I)$ satisfies%
\begin{equation}
\left\{
\begin{array}
[c]{l}%
x^{\prime}(t)=h(t),\quad t\in I\\
x(0)=x(T)
\end{array}
\right.  , \label{eq:rus1207-19}%
\end{equation}
with $h\in C(I)$, then for every $\lambda\neq0$,%
\begin{equation}
x(t)=\int_{0}^{T}G_{\lambda}(t,s)(h(s)+\lambda x(s))~\mathrm{d}s\quad\text{for
all }t\in I\text{,} \label{eq:rus1207-20}%
\end{equation}
where%
\[
G_{\lambda}(t,s)=\left\{
\begin{tabular}
[c]{ll}%
$\dfrac{\mathrm{e}^{\lambda(T+s-t)}}{\mathrm{e}^{\lambda T}-1},$ & if $0\leq
s<t\leq T$\\
& \\
$\dfrac{\mathrm{e}^{\lambda(s-t)}}{\mathrm{e}^{\lambda T}-1},$ & if $0\leq
t<s\leq T$%
\end{tabular}
\right.  \text{.}\
\]
Conversely, if $x\in C(I)$ satisfies (\ref{eq:rus1207-20}) for some
$\lambda\neq0$, then $x\in C^{1}(I)$ and verifies (\ref{eq:rus1207-19}).

Also, if $x\in C^{1}(I)$ is a lower solution for (\ref{eq:rus1207-19}), i.e.,%
\[
\left\{
\begin{array}
[c]{l}%
x^{\prime}(t)\leq h(t),\quad t\in I\\
x(0)\leq x(T)
\end{array}
\right.  ,
\]
then $x$ is a lower solution for (\ref{eq:rus1207-20}), i.e.,%
\[
x(t)\leq\int_{0}^{T}G_{\lambda}(t,s)(h(s)+\lambda x(s))~\mathrm{d}%
s\quad\text{for all }t\in I\text{.}%
\]
Similarly, if $x$ is an upper solution for (\ref{eq:rus1207-19}), i.e.,%
\[
\left\{
\begin{array}
[c]{l}%
x^{\prime}(t)\geq h(t),\quad t\in I\\
x(0)\geq x(T)
\end{array}
\right.  ,
\]
then $x$ is an upper solution for (\ref{eq:rus1207-20}), i.e.,%
\[
x(t)\geq\int_{0}^{T}G_{\lambda}(t,s)(h(s)+\lambda x(s))~\mathrm{d}%
s\quad\text{for all }t\in I\text{.}%
\]

From these results, it follows that the PBVS (\ref{eq:rus1207-18}) is
equivalent to the tripled fixed point problem (\ref{eq:rus1207-14}), with
$X=C(I)$ and
\begin{equation}
F:X^{3}\rightarrow X,\quad F(x,y,z)(t)=\int_{0}^{T}G_{\lambda}(t,s)\left(
f(x(s),y(s),z(s))+\lambda x(s)\right)  ~\mathrm{d}s\quad\text{for all }t\in
I\text{ and }x,y,z\in C(I)\text{;} \label{eq:rus1207-21}%
\end{equation}
additionally, if $%
\begin{pmatrix}
x_{0} & y_{0} & z_{0}\\
u_{0} & v_{0} & w_{0}%
\end{pmatrix}
$ is a coupled lower and upper solution to (\ref{eq:rus1207-18}), then
(\ref{eq:rus1207-17}) follows.

Now, we are able to formulate and prove the main result in this section.

\begin{theorem}
Under the assumptions (f1) and (f2), if (\ref{eq:rus1207-18}) has a coupled
lower and upper solution, then (\ref{eq:rus1207-18}) has a unique solution.
\end{theorem}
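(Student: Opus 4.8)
The plan is to read the statement as nothing more than an instance of Corollary \ref{th:rus1207-06}, applied to the operator $F$ from \eqref{eq:rus1207-21} on $X=C(I)$, and then to verify its hypotheses one at a time. The two facts established in the discussion preceding the theorem — that the PBVS \eqref{eq:rus1207-18} is equivalent to the tripled fixed point problem \eqref{eq:rus1207-14}, and that a coupled lower and upper solution yields condition \eqref{eq:rus1207-17} — may be taken for granted, so the entire burden of the proof is to check the remaining hypotheses of the corollary and then invoke it.

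First I would fix the ambient structure: endow $X=C(I)$ with the pointwise partial order and the uniform metric $d(x,y)=\max_{t\in I}|x(t)-y(t)|$. This metric is complete, and $(X,\le)$ is bi-directed because the pointwise maximum and minimum of two continuous functions are again continuous, so every pair admits both an upper and a lower bound. Continuity of $F$ (hypothesis (e1)) follows from the continuity of $f$ in (f1) together with the standard continuity of the linear integral operator with kernel $G_\lambda$; thus (e1) holds and (e2) need not be examined.

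Next I would settle the order-theoretic and contractive hypotheses using the positivity $G_\lambda(t,s)>0$ (valid for $\lambda>0$) and the two-sided estimate (f2). Writing $g(t,a,b,c):=f(t,a,b,c)+\lambda a$, the left inequality in (f2) says exactly that $g$ is nondecreasing in $a$, nonincreasing in $b$ and nondecreasing in $c$; since $F(x,y,z)(t)=\int_0^T G_\lambda(t,s)\,g(s,x(s),y(s),z(s))\,\mathrm{d}s$ has a positive kernel, $F$ inherits these monotonicities, i.e. it is nondecreasing in the first and last variables and nonincreasing in the second, as the corollary demands. For the contraction \eqref{eq:rus1207-16}, take $x\le u$, $y\ge v$, $z\le w$ in $C(I)$; the right inequality in (f2) gives $0\le g(s,u(s),v(s),w(s))-g(s,x(s),y(s),z(s))\le\lambda\,\varphi\bigl(\max\{u(s)-x(s),\,y(s)-v(s),\,w(s)-z(s)\}\bigr)$, and because each pointwise difference is bounded by the corresponding uniform distance and $\varphi$ is nondecreasing, the integrand is at most $\lambda\,\varphi\bigl(\max\{d(x,u),d(y,v),d(z,w)\}\bigr)$. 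Integrating against $G_\lambda$ then yields
\[
|F(u,v,w)(t)-F(x,y,z)(t)|\le\lambda\,\varphi\bigl(\max\{d(x,u),d(y,v),d(z,w)\}\bigr)\int_0^T G_\lambda(t,s)\,\mathrm{d}s .
\]

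The crux — and the step I expect to be the main obstacle — is to show that the trailing factor $\lambda\int_0^T G_\lambda(t,s)\,\mathrm{d}s$ collapses to $1$, for only then does \eqref{eq:rus1207-16} emerge with the same $\varphi$ and no spurious constant. I would establish the identity $\int_0^T G_\lambda(t,s)\,\mathrm{d}s=1/\lambda$ either directly from the explicit formula for $G_\lambda$ (splitting the integral at $s=t$ and summing two elementary exponential integrals) or, more conceptually, by applying the representation \eqref{eq:rus1207-20} to a constant function $x\equiv c$, for which $h\equiv0$ forces $c=\lambda c\int_0^T G_\lambda(t,s)\,\mathrm{d}s$. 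With this identity in hand, taking the supremum over $t\in I$ gives $d(F(u,v,w),F(x,y,z))\le\varphi(\max\{d(x,u),d(y,v),d(z,w)\})$, which is precisely \eqref{eq:rus1207-16}. All the hypotheses of Corollary \ref{th:rus1207-06} then hold, and since \eqref{eq:rus1207-17} is guaranteed by the coupled lower and upper solution, the corollary produces a unique solution of \eqref{eq:rus1207-14}, which by the established equivalence is the unique solution of the PBVS \eqref{eq:rus1207-18}.
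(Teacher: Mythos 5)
Your proposal is correct and follows essentially the same route as the paper: verify the hypotheses of Corollary \ref{th:rus1207-06} for $F$ from \eqref{eq:rus1207-21} on $X=C(I)$ with the pointwise order and sup-metric, using the left inequality in (f2) with the positivity of $G_\lambda$ for the mixed monotonicity, the right inequality together with $\int_0^T G_\lambda(t,s)\,\mathrm{d}s=1/\lambda$ for the contraction \eqref{eq:rus1207-16}, and the coupled lower and upper solution for \eqref{eq:rus1207-17}. You even supply details the paper leaves implicit (a derivation of the kernel identity via the constant-function case of \eqref{eq:rus1207-20}, and an explicit continuity check for (e1)), which only strengthens the argument.
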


\begin{proof}
Due to the previous remarks, it is sufficient to verify the conditions in
Corollary \ref{th:rus1207-06} for $X:=C(I)$, partially ordered by $x\preceq
y\Leftrightarrow x(t)\leq y(t)$ for all $t\in I~($for $x,y\in X)$, where the
complete metric $d$ is induced by the sup-norm on $X$, i.e., $d(x,y)=\sup
_{t\in I}\left\vert x(t)-y(t)\right\vert $ $(x,y\in X)$ and the operator $F$
is defined by (\ref{eq:rus1207-21}).

It is know that $(X,\preceq)$ is a lattice, hence $(X,\preceq)$ is
bi-directed. We check now that $F$ is nondecreasing in the first and the last
variable, while nonincreasing in the second one and that it verifies the
contraction condition (\ref{eq:rus1207-16}). Indeed, for all $x,y,z,u,v,w\in
X$ with $x\leq u,y\geq v,z\leq w$, it follows from the first inequality in
(f2) and the positivity of $G_{\lambda}$ (for $\lambda>0$) that
\[
F(u,v,w)(t)-F(x,y,z)(t)=\int_{0}^{T}G_{\lambda}(t,s)\left(
f(u(s),v(s),w(s))+\lambda u(s)-f(x(s),y(s),z(s))-\lambda x(s)\right)
~\mathrm{d}s\geq0\text{\quad for all }t\in I\text{.}%
\]
Now, from the last inequality in (f2), we have%
\begin{align*}
F(u,v,w)(t)-F(x,y,z)(t)  &  \leq\lambda\int_{0}^{T}G_{\lambda}(t,s)\cdot
\varphi\left(  \max\left\{  u(s)-x(s),y(s)-v(s),w(s)-z(s)\right\}  \right)
~\mathrm{d}s\\
&  \leq\lambda\left(  \int_{0}^{T}G_{\lambda}(t,s)~\mathrm{d}s\right)
\cdot\varphi\left(  \max\left\{  d(x,u),d(y,v),d(z,w)\right\}  \right) \\
&  \leq\varphi\left(  \max\left\{  d(x,u),d(y,v),d(z,w)\right\}  \right)
\text{\quad for all }t\in I\text{,}%
\end{align*}
where we used that $\varphi$ is nondecreasing and $\int_{0}^{T}G_{\lambda
}(t,s)~\mathrm{d}s=\frac{1}{\lambda}$ for all $t\in I$; this concludes
(\ref{eq:rus1207-16}).

Finally, both (e1) and (e2) are satisfied, while (\ref{eq:rus1207-17}) follows
by the assumption that (\ref{eq:rus1207-18}) has a coupled lower and upper
solution. The proof is now complete.
\end{proof}

\section{Some conclusions and final remarks}

\begin{remark}
\label{th:rus1207-07}In \cite{Turinici2011}, Turinici studied
(\ref{eq:rus1207-01}) under the assumption that $\{T_{i}:i\in\{1,2,\ldots
,N\}\}$ are all coordinate-wise nondecreasing, by analyzing the fixed point
problem for the nondecreasing operator $T=(T_{1},T_{2},\ldots,T_{N}%
):X\rightarrow X$, subject to the usual product quasi-order on $X$ (defined in
(\ref{eq:rus1207-02})).

Clearly, the methods and results in \cite{Turinici2011} also apply when any of
the quasi-orders $\leq_{i}$ are reversed (i.e., $\leq_{i}$ is replaced with
the reversed quasi-order $\geq_{i}$), but as one can easily see, it is not
possible (by this method only) to cover all the existing combinations of
coordinate-wise monotone operators; it is enough to consider $N=2$ to notice
there are only two types (of the total 16) of partially monotone systems for
which the associated operator $T$ can be made nondecreasing, by eventually
reversing $\leq_{1}$ and/or $\leq_{2}$. On the same note, a simple
combinatorial analysis shows that for any given $N$, there are $2^{N^{2}}$
types of partially monotone systems of order $N$ (based on a given family of
quasi-ordered sets), yet for only $2^{N-1}$ of them the associated operator
$T$ can be made nondecreasing by a (selective) reversing of the quasi-orders.

Based on this fact, we can conclude that our method for studying
(\ref{eq:rus1207-01}) is justified and can not be superseded by the simpler
method in \cite{Turinici2011}.
\end{remark}

\begin{remark}
Due to the result established in Theorem \ref{eq:rus1207-04}, one can obtain
any number of criteria for the existence and/or uniqueness of solutions to
(\ref{eq:rus1207-01}) by applying different (coupled) fixed point results to
the associated mixed monotone operator. In this context, Theorem
\ref{th:rus1207-05} should be seen as one of the many possible answers to
(\ref{eq:rus1207-01}), hence leaving space for many developments and improvements.
\end{remark}

\begin{remark}
In the recent years, there has been an increased interest in generalizing the
notions of fixed point and coupled fixed point for operators with monotonic
properties to other concepts like tripled fixed point \cite{Berinde2011},
quadrupled fixed point \cite{Karapinar2012}, fixed point of $N$-order
\cite{Berzig2012} or multidimensional fixed point \cite{Roldan2012} (where the
letter contains all the others). All these new concepts (and any of their
possible variations) can be seen as solutions to some particular partially
monotone system, as previously exemplified; hence, can be studied in a single, unified and more
clear approach using the methods described in this paper.
\end{remark}

\section*{Acknowledgement}

The author is grateful for the financial support provided by the Sectoral
Operational Programme Human Resources Development 2007-2013 of the Romanian
Ministry of Labor, Family and Social Protection through the Financial
Agreement POSDRU/89/1.5/S/62557.

\bibliographystyle{elsart-num-sort}
\bibliography{RUS_20120830}

\end{document}